\newtheorem{lem}{Lemma}
\newtheorem{cor}[lem]{Corollary}
\newtheorem{thm}[lem]{Theorem}
\newcommand{\ra}{\rightarrow}
\newcommand{\GL}{\operatorname{GL}}
\newcommand{\Spin}{\operatorname{Spin}}
\newcommand{\SO}{\operatorname{SO}}
\newcommand{\cO}{{\mathcal O}}
\newcommand{\cP}{{\mathcal P}}
\newcommand{\cX}{{\mathcal X}}
\newcommand{\cY}{{\mathcal Y}}
\newcommand{\bR}{{\mathbb R}}
\newcommand{\bC}{{\mathbb C}}
\newcommand{\bZ}{{\mathbb Z}}
\newcommand{\bQ}{{\mathbb Q}}
\newcommand{\bF}{{\mathbb F}}
\begin{document}
\title{Profinite completions and Kazhdan's property (T)}
\author{Menny Aka}
\address{Department of Mathematics,\newline%
\indent
The Hebrew University,\newline%
\indent
Givat Ram, Jerusalem 91904, Israel.}
\email{mennyaka@math.huji.ac.il}

\subjclass{Primary 20E18, 22E40; Secondary 11h55} %
\keywords{profinite groups, profinite properties, Kazhdan's property (T), arithmetic groups}%

\begin{abstract}
We show that property $(T)$ is not profinite, that is, we construct two finitely generated residually finite groups which have isomorphic profinite completions while  one admits property (T) and the other does not. This settles a question raised by M. Kassabov.
\end{abstract}
\maketitle
\bibliographystyle{alpha}
\section{Introduction}
Two finitely generated groups have isomorphic profinite completions if and only if they have the same collection of finite quotients.
A property $\cP$ of finitely generated residually finite groups is called a \emph{profinite property} if the following is satisfied: if $\Gamma_1$ and $\Gamma_2$ are such groups with $\widehat{\Gamma_1} \cong \widehat{\Gamma_2}$ (i.e., the profinite completion of $\Gamma_1$ and $\Gamma_2$ are isomorphic) then $\Gamma_1$ has $\cP$ if and only if $\Gamma_2$ has $\cP$.

There are various interesting properties which are trivially profinite properties: e.g., having infinite Abelanization, rate of subgroup growth etc. But there are other profinite properties which are in some sense less trivial, for example,  having polynomial word growth (this follows from Gromov's seminal result on polynomial growth) and for a finitely presented groups the property of being large (a group is said to be \emph{large} if one of its finite index subgroups has a free non-Abelian quotient)\cite{Lack07}. On the other hand, various other properties are not profinite. In a recent work, it is shown  in \cite{CH09} that there exist two finitely presented residually-finite groups such that one is conjugacy separable and the other is not, and yet they have isomorphic profinite completions; this shows that being conjugacy separable is not a profinite property. In a work in preparation, Kassabov showed that it follows from \cite{Kass07},\cite{KN06}  that property $(\tau)$ is not a profinite property, and he asked whether Kazhdan property (T) is a profinite property. In this note, we show that Kazhdan property (T) is not a profinite property. Explicitly we prove:
\begin{thm}\label{thm:main theorem}
Let $D$ be a positive square-free integer, $k:= \bQ(\sqrt{D})$ and $\cO_k$ its ring of integers. Fix an integer $n \geq 6$ and let $\Gamma = \Spin(1,n)(\cO_k)$ and $\Lambda = \Spin(5,n-4)(\cO_k)$. Then, there exist finite-index subgroups $\Gamma_0< \Gamma$ and $\Lambda_0<\Lambda$ such that the profinite completion of $\Gamma_0$ is isomorphic to the profinite completion of $\Lambda_0$ while $\Lambda_0$ admits property $(T)$ and $\Gamma_0$ does not. Therefore, Kazhdan property (T) is not profinite.
\end{thm}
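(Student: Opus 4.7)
The plan is to realise both $\Gamma$ and $\Lambda$ as irreducible arithmetic lattices produced, via the two real embeddings of $k$, from quadratic forms defined over $\QQ$. Take $q_1=\langle 1,-1,\ldots,-1\rangle$ (signature $(1,n)$) and $q_2=\langle 1,1,1,1,1,-1,\ldots,-1\rangle$ (signature $(5,n-4)$); then $\Gamma=\Spin(q_1)(\cO_k)$ and $\Lambda=\Spin(q_2)(\cO_k)$ embed as irreducible lattices in $\Spin(1,n)(\RR)^2$ and $\Spin(5,n-4)(\RR)^2$ respectively. Since $\Spin(p,q)(\RR)$ has Kazhdan's property (T) if and only if $\min(p,q)\ge 2$, the ambient group for $\Lambda$ has (T) (as $n\ge 6$) while the one for $\Gamma$ does not; and because property (T) passes between a Lie group and any lattice in it, this already gives the (T) dichotomy for any finite-index subgroups $\Gamma_0\subset\Gamma$ and $\Lambda_0\subset\Lambda$.

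\textbf{Matching the finite-adelic structure.} The next step is to show that $q_1$ and $q_2$ are equivalent over every nonarchimedean completion of $k$. They share the discriminant $(-1)^n$, and a direct computation of Hasse--Witt invariants gives $\epsilon_v(q_1)=\epsilon_v(q_2)$ for every finite $v$: the relevant exponent is $\binom{n}{2}-\binom{n-4}{2}=4n-10$, which is even, so the Hilbert-symbol contributions at primes above $2$ cancel (while at odd primes all contributions are already trivial). Consequently the algebraic groups $\Spin(q_1)$ and $\Spin(q_2)$ become isomorphic over every $k_v$ with $v$ finite, and after a finite-index adjustment at the finitely many primes where the chosen integral models still differ, one obtains finite-index subgroups $\Gamma_0$ and $\Lambda_0$ with canonically identified closures in the adelic groups $\Spin(q_i)(\widehat{\cO_k})$; equivalently, their congruence completions are isomorphic.

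\textbf{From congruence to profinite completion -- the main obstacle.} It remains to identify $\widehat{\Gamma_0}$ and $\widehat{\Lambda_0}$ with their respective congruence completions. Both $q_i$ are $\QQ$-isotropic, so the $\QQ$-groups $\mathrm{Res}_{k/\QQ}\Spin(q_i)$ are simply connected $\QQ$-almost-simple isotropic groups of total $\RR$-rank $\ge 2$. For $\Lambda$, whose local factors already have real rank $\ge 2$, Raghunathan's congruence subgroup theorem applies directly. For $\Gamma$ the archimedean factors have real rank only $1$, but the total $S$-rank at the archimedean places is still $2$, so one must invoke the strengthening of CSP to $S$-rank $\ge 2$ isotropic groups due to Raghunathan and Rapinchuk; this is the most delicate input and is precisely why one is forced to work over a real quadratic field rather than over $\QQ$. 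Once CSP ensures that the congruence kernel is finite and central, a further finite-index reduction kills it, so the profinite completions coincide with the congruence completions matched in the previous paragraph, giving $\widehat{\Gamma_0}\cong\widehat{\Lambda_0}$ as required.
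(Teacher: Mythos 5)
Your proposal is correct and its overall architecture coincides with the paper's: realize $\Gamma$ and $\Lambda$ as irreducible lattices in $\Spin(1,n)(\RR)^2$ and $\Spin(5,n-4)(\RR)^2$ via the two real embeddings of $k$, read off the property (T) dichotomy from the real ranks of the archimedean factors (your blanket criterion ``$\min(p,q)\geq 2$'' has the exception $\SO(2,2)$, but is valid in the range $n\geq 6$), match the groups at the finite places, and use the congruence subgroup property plus a finite-index reduction killing the finite congruence kernel to identify profinite completions with congruence completions. The genuine difference is in the local-matching step. You prove equivalence of $q_1$ and $q_2$ over the local \emph{fields} $k_v$ by comparing dimension, discriminant and Hasse--Witt invariants (the parity computation $\binom{n}{2}-\binom{n-4}{2}=4n-10$ is correct), and then must repair the integral structure at the finitely many places --- essentially the dyadic ones --- where $\Spin(q_1)(\cO_v)$ and $\Spin(q_2)(\cO_v)$ are only commensurable open compact subgroups of the identified $k_v$-group; you assert this ``finite-index adjustment'' without carrying it out. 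The paper instead proves \emph{integral} equivalence over $\bZ_p$ for every prime, including $p=2$, by exhibiting an explicit $M_p\in\GL_4(\bZ_p)$ with $M_pM_p^t=-I$ built quaternionically from a $\bZ_p$-solution of $x_1^2+x_2^2+x_3^2+x_4^2=-1$; this yields $G_1(\cO_v)\cong G_2(\cO_v)$ on the nose for all $v$, so no adjustment at finite places is needed and the only finite-index passage is the one forced by the congruence kernel of $\Gamma$. Your route is the more classical one and generalizes to any pair of forms with matching local invariants, at the cost of leaving the commensurability step as a standard but unproved assertion; the paper's route is more elementary and completely self-contained. The CSP citations also differ (Raghunathan--Rapinchuk for isotropic groups of $S$-rank $\geq 2$ versus Kneser's results for spin groups, which give the congruence kernel explicitly as trivial for $\Lambda$ and of order at most $2$ for $\Gamma$), but both suffice, and your remark that the real quadratic field is needed precisely to raise the archimedean $S$-rank of $\Spin(1,n)$ to $2$ is exactly the right motivation.
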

In particular, there exist non-isomorphic arithmetic groups with isomorphic profinite completions. Note that $\Spin(1,n)(\cO_k)$ (resp.\ $\Spin(5,n-4)(\cO_k)$) are central extensions of irreducible lattices in $H_1:=\SO(1,n)\times \SO(1,n)$ (resp.\ $H_2:=\SO(5,n-4)\times \SO(5,n-4)$). We can therefore deduce that $H_1$ and $H_2$ have lattices $\Gamma_1$ and $\Lambda_1$ resp.\ with isomorphic profinite completions. As $\bR-rank(H_1)=2$ while $\bR-rank(H_2)=10$ we see that the rank of the ambient Lie group is also not a profinite property (see also Remark \ref{rank remark} and compare with \cite{PrRaGu72}.)

While we can construct many non-trivial examples of non-isomorphic arithmetic groups with isomorphic profinite completions as the one above, we show in \cite{Aka} that every set of higher rank arithmetic groups with isomorphic profinite completion consists of \emph{finitely many} isomorphism classes.

Our construction is based on a very simple idea. Vaguely speaking, the groups above have the congruence subgroup property and hence their profinite completions are essentially a product of the $p$-adic completions. Their $p$-adic completions agree since the quadratic forms $(1,n)$ and $(5,n-4)$ agree in every local field. Nevertheless, they do not agree over $\bR$. Moreover, $\Gamma_0$ is a lattice in $\Spin(1,n)\times \Spin(1,n)$ while $\Lambda_0$ is a lattice in $\Spin(5,n-4)\times \Spin(5,n-4)$. The latter has $(T)$ while the former does not.
We also note that both groups admit property tau.


This note is organized as follows: Much of our construction relies on quadratic forms theory, their Clifford algebra and their spin groups, so we review the relevant results and definitions in \S2 together  with a basic Lemma on profinite completions. In \S3 we prove Theorem \ref{thm:main theorem}.\\ 
\textbf{Acknowledgments:} I would like to thank my advisor, Alex Lubotzky, for suggesting the problem and for reading several manuscripts of this note. I would also like to thank Andrei Rapinchuk and Uzi Vishne for help and references on quadratic forms theory and their spin groups. Finally, I'd like to thank Martin Kassabov and the referee for their valuable comments.
This note is a part of the author's Ph.D. thesis at the Hebrew University. This research was partially funded by the "Hoffman Leadership and Responsibility" fellowship program, at the Hebrew University. I also thank the ISEF and the ERC for their support.

\section{Preliminaries}
\subsection{Quadratic forms and their spin groups}
We denote the ring of $p$-adic integers by $\bZ_p$.
\begin{lem}
The quadratic forms $$q_1:= x_1^2+x_2^2+x_3^2+x_4^2,\quad  and \quad q_2: = -q_1 = -x_1^2-x_2^2-x_3^2-x_4^2$$ are equivalent over $\bZ_p$ for every prime $p$, that is, for each prime $p$ there exist $M_p\in \GL_4(\bZ_p)$ such that $M_pM_p^t=-I$.
\end{lem}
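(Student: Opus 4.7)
The plan is to reduce the statement to a representation question, namely whether $-1$ is a sum of four squares in $\bZ_p$, and then to invoke the multiplicative structure of the quaternions to build $M_p$ from such a representation.

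Concretely, identify $\bZ_p^4$ with the Lipschitz order $R_p := \bZ_p\langle 1,i,j,k\rangle$ in the Hamilton quaternions via $(a_0,a_1,a_2,a_3)\leftrightarrow \alpha = a_0+a_1i+a_2j+a_3k$. Under this identification the quadratic form $q_1$ coincides with the reduced norm $N(\alpha)=\alpha\bar\alpha = a_0^2+a_1^2+a_2^2+a_3^2$. Suppose I have found $\alpha_0\in R_p$ with $N(\alpha_0)=-1$. Then $N(\alpha_0)\in\bZ_p^\times$, so $\alpha_0$ is a unit of $R_p$, and left multiplication $L_{\alpha_0}:\beta\mapsto\alpha_0\beta$ is a $\bZ_p$-linear automorphism of $R_p\cong\bZ_p^4$. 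Multiplicativity of $N$ gives $N(L_{\alpha_0}\beta)=N(\alpha_0)N(\beta)=-N(\beta)$, i.e.\ $q_1\circ L_{\alpha_0}=-q_1$. Writing $L_{\alpha_0}$ in the standard $\bZ_p$-basis $\{1,i,j,k\}$ produces a matrix $M_p\in\GL_4(\bZ_p)$ (it is invertible because $N(\alpha_0)$ is a unit) whose defining relation translates into $M_pM_p^t=-I$, as required.

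It remains to exhibit $\alpha_0$, i.e.\ to write $-1$ as a sum of four squares in $\bZ_p$. For odd $p$, the standard pigeonhole argument shows that the two sets $\{x^2\pmod p\}$ and $\{-1-y^2\pmod p\}$ each have $(p+1)/2$ elements, so they meet: there exist $a,b\in\bZ$ with $a^2+b^2\equiv -1\pmod p$, and at least one of them, say $a$, is a unit. Then Hensel's lemma applied to $g(x)=x^2+b^2+1$ at $x=a$ (its derivative $2a$ is a unit for $p$ odd) lifts this to $\bZ_p$, giving $-1=a^2+b^2+0^2+0^2$. For $p=2$ the derivatives are forced to sit in $2\bZ_2$, so the argument is a touch more delicate and this is where the real work lies. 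I would handle it by direct inspection: take $f(d)=d^2+7$; since $f(3)=16$ has $|f(3)|_2=2^{-4}$ and $f'(3)=6$ has $|f'(3)|_2^2=2^{-2}$, the strong form of Hensel's lemma (the condition $|f(a)|_p<|f'(a)|_p^2$) produces $d\in\bZ_2$ with $d^2=-7$. Then $-1=1^2+1^2+2^2+d^2$ in $\bZ_2$, which furnishes $\alpha_0$ in the remaining case and completes the construction of $M_p$ for every prime $p$.
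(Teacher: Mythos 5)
Your proposal is correct and takes essentially the same route as the paper: the paper also reduces the lemma to writing $-1$ as a sum of four squares in $\bZ_p$ (via the two-squares-mod-$p$ plus Hensel argument for odd $p$, and the witness $(2,1,1,\sqrt{-7})$ for $p=2$), and the explicit $4\times 4$ matrix $M_p$ it then writes down is exactly the quaternion multiplication matrix your $L_{\alpha_0}$ produces. The only difference is presentational: the paper states the matrix and verifies $M_pM_p^t=-I$ directly, whereas you derive it from multiplicativity of the quaternion norm.
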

\begin{proof}
We first note that for any $p$, the equation 
\begin{equation}
x_1^2+x_2^2+x_3^2+x_4^2=-1
\label{eq:minus 1}
\end{equation}  has solutions $(x_p,y_p,z_p,w_p)\in \bZ_p^4$. Indeed, for $p\neq 2$, this follows from an easy application of Hensel's lemma together with the fact that in $\bF_p$ each element is a sum to two squares. For $p=2$, we may take  $(x_2,y_2,z_2,w_2)=(2 , 1 , 1 , \sqrt{-7})$. Now, for each $p$ consider the matrix
$$M_p:= \left(
\begin{array}{cccc}
x_p & y_p & z_p & w_p \\
-y_p & x_p & -w_p & z_p \\
-z_p & w_p & x_p & -y_p \\
-w_p & -z_p & y_p & x_p 
\end{array} \right)
$$
and note that $M_p\in M_4(\bZ_p)$, $M_pM_p^t=-I$. This shows that $q_1$ and $q_2$ are equivalent over $\bZ_p$ for all primes $p$.
\end{proof}

\begin{cor}\label{cor:quadratics equivalence}
Let $q_{m,n}= \sum_{i=1}^m x_i^2 - \sum_{i=1}^n y_i^2$. For $m\geq4$, the quadratic form $q_{m,n}$ and $q_{m-4,n+4}$ are integrally equivalent over $\bZ_p$ for all primes $p$. It follows that for any number field $k$ and a finite place $v$,  we have that  $q_{m,n}$ and $q_{m-4,n+4}$ are equivalent over $\cO_v$, where $\cO_v$ is the ring of integers of $k_v$.
\end{cor}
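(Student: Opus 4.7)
The plan is to bootstrap the previous lemma by building a block change of variables. Writing $q_{m,n}$ as the quadratic form with Gram matrix $D := \mathrm{diag}(I_m, -I_n)$, I will construct $N \in \GL_{m+n}(\bZ_p)$ with $N^t D N$ equal to the Gram matrix of $q_{m-4,n+4}$, up to a harmless reordering of the basis vectors.

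Concretely, take $N := \mathrm{diag}(M_p^t,\, I_{m+n-4})$, where $M_p \in \GL_4(\bZ_p)$ is the matrix from the preceding lemma, arranged to act on the first four (positive) coordinates of $q_{m,n}$. Since $M_p M_p^t = -I_4$, a direct block computation gives
\begin{equation*}
N^t D N \;=\; \mathrm{diag}\bigl(M_p \cdot I_4 \cdot M_p^t,\; I_{m-4},\; -I_n\bigr) \;=\; \mathrm{diag}(-I_4,\, I_{m-4},\, -I_n),
\end{equation*}
which represents the same quadratic form as $\mathrm{diag}(I_{m-4}, -I_{n+4})$ after a coordinate permutation. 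Composing $N$ with the corresponding permutation matrix (itself in $\GL_{m+n}(\bZ)$) produces the required integral equivalence over $\bZ_p$. The hypothesis $m \geq 4$ enters precisely to guarantee that four positive squares are available to flip.

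For the second assertion, let $v$ be a finite place of the number field $k$ lying above a rational prime $p$. Then $\cO_v$ is the ring of integers of the local field $k_v$, a finite extension of $\bQ_p$, so $\bZ_p \subseteq \cO_v$. The matrix $N$ constructed above lies in $\GL_{m+n}(\bZ_p) \subseteq \GL_{m+n}(\cO_v)$ and therefore witnesses the equivalence of $q_{m,n}$ and $q_{m-4,n+4}$ over $\cO_v$ as well. There is no genuine obstacle at this stage: the substantive content of the corollary is the explicit $\bZ_p$-integral solution to $x_1^2+x_2^2+x_3^2+x_4^2 = -1$ provided by the preceding lemma, while what remains here is only careful bookkeeping with block matrices and the elementary observation that local rings of integers in characteristic zero contain $\bZ_p$.
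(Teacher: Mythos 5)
Your proof is correct and follows essentially the same route as the paper: both isolate a $4$-dimensional positive-definite block, apply the preceding lemma's matrix $M_p$ to flip its sign over $\bZ_p$, and leave the remaining $\mathrm{diag}(I_{m-4},-I_n)$ part untouched (the paper phrases this as the orthogonal decomposition $V_{m,n}\cong V_1\oplus V_{m-4,n}$, $V_{m-4,n+4}\cong V_2\oplus V_{m-4,n}$, while you write out the explicit block change-of-basis matrix). Your explicit handling of the passage from $\bZ_p$ to $\cO_v$ via $\bZ_p\subseteq\cO_v$ is a small bonus the paper leaves implicit.
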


\begin{proof}
Let $V_i$ be the quadratic space associated to $q_i,\, i=1,2$, and $V_{m,n}$ the quadratic space associated to $q_{m,n}$. Then for $m\geq4$, $$V_{m,n} \cong V_1\oplus V_{m-4,n}\quad \text{and} \quad V_{m-4,n+4}\cong V_2\oplus V_{m-4,n}$$ as quadratic spaces. As $V_1\cong V_2$ over $\bZ_p$ for all $p$, so does $V_{m,n}$ and $V_{m-4,n+4}$.
\end{proof}

In order to fix our notation and to fix specific representations of the groups involved, we recall the relevant definitions and give an explicit definition of the Clifford algebra and the spin group of a quadratic form. All objects and results that are described here can be found in \cite{cassels:RQF} (in particular, chapter 10, \S2) or in \cite[\S 20]{FulHar91}.

Given a non-degenerate quadratic space $(V,\phi)$ of dimension $n$ over a field $k$, there exists an associative algebra $C(V,\phi)$ over $k$ that contains $V$ as a linear subspace and satisfies:
\begin{enumerate}
	\item $C(V,\phi)$ is of dimension $2^n$ as a vector space.
	\item For all $x\in V$, $x\cdot x=\phi(x)$.
	\item $C(V,\phi)$ is generated as an algebra by $V$.
\end{enumerate}
Moreover, these properties determine $C(V,\phi)$ uniquely (up-to a $k$-algebra isomorphism that fix $V$), and it is called the Clifford algebra of $V$.

We now give an explicit description of $C(V,\phi)$. This description depends on a choice of a normal basis, that is, a basis which is orthogonal with respect to the bilinear form associated to the quadratic form (see \cite[Chapter 10 \S2]{cassels:RQF}). 
Every non-degenerate quadratic space admits a normal basis and we fix a normal basis $e_1,\ldots,e_n$ for $(V,\phi)$. Recall that $C(V,\phi)$ contains $V$ and let $J$ be any subset of $\{1,2,\ldots,n\}$ arranged in ascending order, say $$j_1<j_2<\ldots<j_r $$ where $r\leq n$, and let $$e(J):= e_{j_1}\cdot e_{j_2}\cdot \ldots \cdot e_{j_r},$$ be the multiplication of the $e_j$'s and let $e(\emptyset)$ be the unit element. Then, the set $$\{e(J): J\subset \{1,2,\ldots,n\} \, \text{ ordered in ascending order} , 0\leq|J|\leq n\}$$  is a basis of $C(V,\phi)$. As $C(V,\phi)$ is also generated by $V$, the multiplication in $C(V,\phi)$ is determined by $$e_ie_i = \phi(e_i)$$ and for $i\neq j$,   $$e_ie_j = -e_je_i.$$

The even Clifford algebra of $C(V,\phi)$ is the algebra generated by $\{e(J): |J|\, \text{is even}\}$, and is denoted by $C^0(V,\phi)$. There exists an involution $$':C(V,\phi) \ra C(V,\phi)$$ defined on the basis elements by $(e(J))' = e_{j_r}\cdot e_{j_{r-1}}\cdot \ldots \cdot e_{j_1}$ when $J$ is as above, and extended linearly. Let $(C^0(V,\phi))^*$ denote the group of invertible elements of $C^0(V,\phi)$ and define $\Spin(V,\phi)$ as $$\Spin(V,\phi) := \{x\in (C^0(V,\phi))^*: xx'=1,\, x V x'\subseteq V\}.$$
The group $\Spin(V,\phi)$ admits a faithful irreducible linear representation to $\GL(C^0(V,\phi))$, by acting as right multiplication. We endow $\GL(C^0(V,\phi))$ with the structure that is induced from the basis $\{e(J): |J|\, \text{is even}\}$ of  $C^0(V,\phi)$. This gives a representation of $\Spin(V,\phi)$ to $\GL_{2^{n-1}}(\bC)$ and  we identify $\Spin(V,\phi)$ with its image in this representation.

We now turn to the case of $(V,\phi) = (V_{m,n},q_{m,n})$. Let $$G_{m,n} = \Spin(V_{m,n},q_{m,n})\quad\text{and}\quad C_{m,n}= C(V_{m,n},q_{m,n}).$$ We choose the basis  $e_1,\ldots,e_m,e_{m+1},\ldots e_{m+n}$ of $V_{m,n}$ that satisfy 
\begin{equation}
q_{m,n}(e_i) = \left\{
\begin{array}{rl}
1 & \text{if } 1\leq i \leq m\\
-1 & \text{if } m+1\leq i \leq m+n
\end{array} \right.
\label{eq:normal}
\end{equation}
equwhich is normal. 
Using this basis we get by the above construction a specific faithful irreducible representation of $G_{m,n}$ to  $\GL(C_{m,n}^0)\cong \GL_{2^{m+n-1}}(\bC)$, by acting as right multiplication. The isomorphism $\GL(C_{m,n}^0)\cong \GL_{2^{m+n-1}}(\bC)$ depends on our specific choice of a normal basis and we fix this choice throughout. For any ring $R\subset \bC$, we let $G_{m,n}(R):= G_{m,n}\cap \GL_{2^{m+n-1}}(R)$ and call this group the $R$-points of $G_{m,n}$. We remark that the (conjugacy class of the) representation of $G_{m,n}$ is independent of a choice of normal basis for the quadratic space, but the group of $R$-point, for a general ring $R$, may depend on this choice. For this reason, we \emph{fix} through out the above representations of $G_{m,n}$. 

The group $G_{m,n}$ is known to be an almost simple and absolutely simple algebraic group defined over $\bQ$.  For any field $k\subset \bR$,  $\text{k-rank}(G_{m,n}) = \text{min}(m,n)$ (See for example, \cite[\S 20]{FulHar91}).  Moreover, the following follows from Corollary \ref{cor:quadratics equivalence}:

\begin{cor}\label{cor:local isomorphism}
Let $m>4, n>0$ be natural numbers, $k$ be any number field and $v$ a discrete valuation on $k$. Then, under the fixed representations which are described above, $G_{m,n}(\cO_v)$ is isomorphic to $G_{m-4,n+4}(\cO_v)$, where $\cO_v$ denotes the ring of integers in the completion of $k$ with respect to $v$.
\end{cor}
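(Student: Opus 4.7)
The plan is to upgrade the $\cO_v$-equivalence of quadratic forms from Corollary~\ref{cor:quadratics equivalence} to an isomorphism of spin groups by the functoriality of the Clifford algebra construction. Let $v$ be a discrete valuation on $k$ and $\cO_v$ the corresponding local ring of integers. Corollary~\ref{cor:quadratics equivalence} furnishes an $\cO_v$-linear isometry $T:(V_{m,n}\otimes\cO_v,q_{m,n})\lra (V_{m-4,n+4}\otimes\cO_v,q_{m-4,n+4})$; by the universal property of the Clifford algebra (which holds over any commutative ring) $T$ extends uniquely to an $\cO_v$-algebra isomorphism $\tilde T:C_{m,n}\otimes\cO_v\lra C_{m-4,n+4}\otimes\cO_v$.

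From here the proof proceeds in three short steps. First, since $\tilde T(V_{m,n}\otimes\cO_v)=V_{m-4,n+4}\otimes\cO_v$, the map $\tilde T$ respects the $\ZZ/2\ZZ$-grading on the Clifford algebras, and therefore restricts to an $\cO_v$-algebra isomorphism between the even parts $C^0_{m,n}\otimes\cO_v$ and $C^0_{m-4,n+4}\otimes\cO_v$. Second, the involution defined by $(v_1\cdots v_r)'=v_r\cdots v_1$ on products of vectors of $V$ is intrinsic to the Clifford construction, so it is intertwined by $\tilde T$. Third, the conditions $xx'=1$ and $xVx'\subseteq V$ are likewise preserved, so $\tilde T$ induces a group isomorphism between the intrinsic ``integral spin groups''
$$S_{m,n}:=\{x\in(C^0_{m,n}\otimes\cO_v)^\times: xx'=1,\ x(V_{m,n}\otimes\cO_v)x'\subseteq V_{m,n}\otimes\cO_v\}$$
and its $m-4,n+4$ analogue $S_{m-4,n+4}$.

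The main step, and the only one requiring care, is to match $S_{m,n}$ with $G_{m,n}(\cO_v):=G_{m,n}\cap\GL_{2^{m+n-1}}(\cO_v)$, the latter being defined via the fixed representation, which itself depends on the chosen normal basis. The key point I would emphasize: with respect to the fixed basis $\{e(J):|J|\text{ even}\}$ of $C^0_{m,n}$, the structure constants of $C^0_{m,n}$ lie in $\cO_v$, because $e_i^2=\pm 1\in\cO_v^\times$ and $e_ie_j=-e_je_i$. Hence an element $x\in C^0_{m,n}\otimes\bC$ belongs to $C^0_{m,n}\otimes\cO_v$ if and only if the matrix of right multiplication by $x$ in this basis has entries in $\cO_v$ (one direction follows from the stability of $C^0_{m,n}\otimes\cO_v$ under right multiplication by integral elements, the other from the first column of the matrix, obtained as $e(\emptyset)\cdot x=x$). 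Combined with the intrinsic nature of the spin conditions, this yields $G_{m,n}(\cO_v)=S_{m,n}$ and similarly for $m-4,n+4$, so the abstract group isomorphism $\tilde T|_{S_{m,n}}:S_{m,n}\lra S_{m-4,n+4}$ provides the required isomorphism $G_{m,n}(\cO_v)\cong G_{m-4,n+4}(\cO_v)$.
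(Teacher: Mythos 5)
Your proof is correct, and it rests on the same underlying mechanism as the paper's -- transporting the integral equivalence of $q_{m,n}$ and $q_{m-4,n+4}$ from Corollary \ref{cor:quadratics equivalence} through the Clifford construction -- but the execution is genuinely different. The paper stays inside the fixed matrix representations: it forms the \emph{derived matrix} $\tilde M$ of the base-change matrix $M$ (the induced change of basis on $C^0$), checks directly that $\tilde M$ and $\tilde M^{-1}$ have entries in $\cO_v$ because the structure constants and the coefficients of $M^{\pm 1}$ do, and concludes that $\mathrm{Int}(\tilde M)$ identifies the two sets of $\cO_v$-points. You instead invoke the universal property of the Clifford algebra over a commutative ring to get an intrinsic $\cO_v$-algebra isomorphism $\tilde T$ compatible with the grading and the involution, and then separately prove that the representation-dependent definition $G_{m,n}(\cO_v)=G_{m,n}\cap\GL_{2^{m+n-1}}(\cO_v)$ coincides with the intrinsic lattice $S_{m,n}$, via the observation that right multiplication by $x$ is $\cO_v$-integral in the $e(J)$-basis exactly when $x\in C^0_{m,n}\otimes\cO_v$ (the $e(\emptyset)$ entry recovering $x$ itself, and $x^{-1}=x'$ handling invertibility). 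The integrality content is identical -- your claim that $\tilde T$ carries $C_{m,n}\otimes\cO_v$ onto $C_{m-4,n+4}\otimes\cO_v$ is precisely the paper's statement that $\tilde M$ and $\tilde M^{-1}$ are integral -- but your version makes explicit a point the paper passes over quickly, namely why the isomorphism of $\GL(C)(\cO_v)$'s actually maps $G_{m,n}$ onto $G_{m-4,n+4}$: it is induced by an isometry and therefore respects the defining conditions $xx'=1$ and $xVx'\subseteq V$. The cost of your route is having to know the universal property and the identification $C(V\otimes\cO_v,q)\cong C(V,q)\otimes\cO_v$ over a commutative ring (both standard for diagonal forms with unit coefficients); the benefit is a basis-free argument that is essentially the group-scheme proof the paper says would make the corollary ``immediate.''
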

\begin{proof}
Using Corollary \ref{cor:quadratics equivalence}, this follows readily from the definitions in \cite[Chapter 10 \S2]{cassels:RQF}. Moreover, if we would have defined these groups in the language of groups schemes, This corollary was immediate. Nevertheless, since this is crucial for our construction, we give a complete proof, which is rather technical.  

Let $m' = m-4, n'=n+4$, and let $k_v$ denote the completion of $k$ with respect to $v$. Let $(k_v^{m+n},q_{m,n})$ be the quadratic space associated $q_{m,n}$ with the standard normal basis $e_1,\ldots,e_{m+n}$, that is,
\begin{equation}
q_{m,n}(e_i) = \left\{
\begin{array}{rl}
1 & \text{if } 1\leq i \leq m\\
-1 & \text{if } m+1\leq i \leq m+n.
\end{array} \right.
\label{fI1}
\end{equation}
By Corollary \ref{cor:quadratics equivalence}, there exists $M\in \GL_{m+n}(\bZ_p)$ such that $\{f_i:= Me_i\}_{i=1}^{m+n}$ is also a normal basis of $(k_v^{m+n},q_{m,n})$ with 
\begin{equation}
q_{m,n}(f_i) = \left\{
\begin{array}{rl}
1 & \text{if } 1\leq i \leq m'\\
-1 & \text{if } m'+1\leq i \leq m'+n'.
\end{array} \right.
\label{fI2}
\end{equation}
Let $C:= C^0(k_v^{m+n},q_{m,n})$ and $C' = C^0(k_v^{m'+n'},q_{m',n'})$. We show that $\GL(C)(\cO_v)\cong \GL(C')(\cO_v)$ and the corollary readily follows since $G_{m,n}(\cO_v):= G_{m,n}\cap \GL(C)(\cO_v)$ (and resp.\ for $G_{m',n'}$).

The construction above show that the bases $\{e_i\}$ and $\{f_i\}$ give rise to different bases on $C$, which we denote by $E$ and $F$. The bases $E$ and $F$ gives rise to two structures on $\GL(C)$, i.e., two isomorphisms $$\Phi_E,\Phi_F:\GL(C) \ra \GL_{2^{m+n-1}}(\bC).$$ The base $E$ is the base fixed above and by $\GL(C)(\cO_v)$ we mean the $\cO_v$-points of $\GL(C)$ with respect to the basis $E$, i.e., $\Phi_E^{-1}(\GL_{2^{m+n}}(\cO_v))$. By equation \ref{fI2} above, $\GL(C')(\cO_v)$ is isomorphic to the $\cO_v$-points of $\GL(C)$ with respect to the basis $F$. So we may conclude by showing that the $\cO_v$-points of $\GL(C)$ with respect to $E$ are isomorphic to the $\cO_v$-points of $\GL(C)$ with respect to $F$.

Let $\tilde M$ denote the  base change matrix from $E$ to $F$, which is called the \emph{derived matrix} of $M$. From the multiplication rules in $C$ and the fact that $M$ has entries in $\bZ_p$ and $q_{m,n}$ and $q_{m',n'}$ has coefficients in $\bZ_p$, it follows that $\tilde M$ also has entries in $\bZ_p$. The inverse of $\tilde M$ is the derived matrix of $M^{-1}$, which also has entries in $\bZ_p$ by the same argument. Conjugation by $\tilde M$, which we denote by $\rm Int(\tilde M)$, identify the representations $\Phi_E,\Phi_F$, that is, $\rm Int(\tilde M)\circ \Phi_E = \Phi_F$. It follows that the $\cO_v$-points of $\GL(C)$ with respect to $E$ and with respect to $F$ are isomorphic by $\rm Int(\tilde{M})$.

\end{proof}
\subsection{A basic Lemma on profinite completion}
\begin{lem}\label{lem:subgroups of profinite completion}
Let $\Gamma$ be a residually finite group. There is a one-to-one correspondence between the set $\cX$ of all finite-index subgroups of $\Gamma$  and the set $\cY$ of all open subgroup of $\widehat \Gamma$, given by $$X \mapsto \bar X\quad (X \in \cX)$$ $$Y \mapsto Y\cap \Gamma \quad (Y \in \cY)$$
where $\bar X$ denotes the closure of $X$ in $\widehat \Gamma$. Moreover, $\bar X$ is canonically isomorphic to $\widehat X$ and  $$[\Gamma:X] = [\widehat \Gamma: \widehat X].$$
\end{lem}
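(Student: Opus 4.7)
The approach is standard and hinges on two facts about the profinite completion of a residually finite group $\Gamma$: the canonical map $\iota:\Gamma\hra\widehat\Gamma$ has dense image, and the open subgroups of $\widehat\Gamma$ are exactly the closed subgroups of finite index. I would prove the correspondence by verifying that each of the two assignments is well defined and that they are mutually inverse, and then deduce the index equality and the isomorphism $\bar X\cong\widehat X$ as consequences.

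For well-definedness in one direction, given $X\in\cX$, pick a finite-index normal subgroup $N\trianglelefteq\Gamma$ with $N\subseteq X$ (take $N$ to be the normal core of $X$). The closure $\bar N$ in $\widehat\Gamma$ is the kernel of the continuous surjection $\widehat\Gamma\twoheadrightarrow\Gamma/N$, hence open; since $\bar X\supseteq\bar N$, the subgroup $\bar X$ is open. In the other direction, if $Y\in\cY$, then the map $\Gamma/(Y\cap\Gamma)\ra\widehat\Gamma/Y$ on coset spaces is injective (two elements of $\Gamma$ landing in the same coset of $Y$ already land in the same coset of $Y\cap\Gamma$), and it is surjective because $\Gamma$ is dense in $\widehat\Gamma$ and $Y$ is open, so every coset of $Y$ meets $\Gamma$. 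This simultaneously shows $Y\cap\Gamma$ has finite index and establishes the index equality $[\Gamma:Y\cap\Gamma]=[\widehat\Gamma:Y]$.

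To see that the two assignments are inverse, I would argue as follows. For $Y\in\cY$, density of $\Gamma$ gives $\overline{Y\cap\Gamma}=Y$: the inclusion $\subseteq$ is clear since $Y$ is closed, and for $\supseteq$, any $y\in Y$ is a limit of elements of $\Gamma$, which, once they enter the open neighborhood $yY=Y$, lie in $Y\cap\Gamma$. For $X\in\cX$, the identity $\bar X\cap\Gamma=X$ follows from the fact that $X$ is closed in the profinite topology on $\Gamma$: $X$ is a union of cosets of the normal core $N$, and each such coset is closed, as is $X$; so $X=\iota^{-1}(\bar X)$.

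The canonical isomorphism $\bar X\cong\widehat X$ reduces to showing that the subspace topology on $X$ inherited from the profinite topology on $\Gamma$ agrees with the full profinite topology on $X$. One containment is immediate since intersections with $X$ of finite-index subgroups of $\Gamma$ are finite-index in $X$; for the other, any finite-index subgroup $H\leq X$ contains the normal core of $H$ in $X$, and a standard argument (using that $[\Gamma:X]<\infty$) produces a finite-index subgroup of $\Gamma$ contained in $H$. Once the topologies agree, the closure of $X$ in $\widehat\Gamma$ is the completion of $X$ with respect to its own profinite topology, i.e., $\widehat X$. The only mildly delicate point in the whole argument is this last topological agreement; everything else is a direct application of density and of the open = closed + finite index dichotomy in profinite groups.
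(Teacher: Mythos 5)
Your proof is correct, and it is essentially the standard argument; the paper itself offers no proof of this lemma, deferring entirely to \cite[Proposition 16.4.3]{LubotzkySegal}, so your write-up supplies what the paper omits. All the key steps are in place: openness of $\bar X$ via the normal core $N$ and the identification $\bar N=\ker(\widehat\Gamma\twoheadrightarrow\Gamma/N)$, the coset-space bijection $\Gamma/(Y\cap\Gamma)\to\widehat\Gamma/Y$ giving both finiteness of the index and the index equality, and the two density/closedness arguments showing the maps are mutually inverse. One small remark: the ``mildly delicate'' point you flag at the end is in fact the easiest step. If $H\leq X$ has finite index in $X$, then $[\Gamma:H]=[\Gamma:X]\,[X:H]<\infty$, so $H$ is \emph{itself} a finite-index subgroup of $\Gamma$; there is no need to pass to the normal core of $H$ in $X$ or to invoke a separate ``standard argument.'' With that observation the agreement of the induced topology on $X$ with its full profinite topology is immediate, and the identification $\bar X\cong\widehat X$ follows from cofinality of the two inverse systems $\{X/(X\cap U)\}$ and $\{X/M\}$, which you correctly indicate.
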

\begin{proof}
See \cite[Proposition 16.4.3]{LubotzkySegal}
\end{proof}
\section{Property $(T)$ in not profinite - proof of Theorem \ref{thm:main theorem}}
We continue with the notation of Theorem \ref{thm:main theorem} and we let $G_1:= G_{n,1}$ and $G_2:= G_{n-4,5}$ with the fixed representations described above. Let $\sigma_1,\sigma_2$ be the two distinct embeddings of $k$ into $\bR$. They induce natural embeddings $\hat\sigma_1,\hat\sigma_2$ of $G_i(\cO_k)$ to $G_i(\bR)$. We embed  $$\Gamma:=G_1(\cO_k) \ra G_1(\bR)\times G_1(\bR),\, x\mapsto (\hat\sigma_1(x),\hat\sigma_2(x)),$$ and similarly
$$\Lambda:=G_2(\cO_k) \ra G_2(\bR)\times G_2(\bR),\, x\mapsto (\hat\sigma_1(x),\hat\sigma_2(x))$$
It is well-known that these embeddings realize $\Gamma$ and $\Lambda$ as irreducible lattices.

We now show that any finite-index subgroup  $\Gamma_0 < \Gamma$ does not have property $(T)$ while any finite-index subgroup  $\Lambda_0 < \Lambda$ does. The group $G_1(\bR) = \Spin(n,1)(\bR)$, which is a central extension of $\SO(n,1)(\bR)$, does not have property (T) (\cite[Theorem 3.5.4]{BHV:(T)}) and neither does the direct product of $G_1(\bR)$ with itself.  (\cite[Proposition 1.7.8]{BHV:(T)}). Note that any finite-index subgroup  $\Gamma_0 < \Gamma$ is a lattice in $G_1(\bR)\times G_1(\bR)$. Since a lattice in a group has property (T) if and only if the group has property (T) (\cite[Proposition 1.7.1]{BHV:(T)}), it follows that any finite-index subgroup  $\Gamma_0 < \Gamma$ does not have property (T).

In contrast, the group $G_2(\bR) = \Spin(n-4,5)(\bR)$, which is a central extension of $\SO(n-4,5)(\bR)$, does have property (T) since it is almost simple of rank $\geq 2$ and therefore so does its direct product with itself (\cite[Proposition 1.7.8]{BHV:(T)}). Again, any finite-index subgroup  $\Lambda_0 < \Lambda$ is a lattice in $G_2(\bR)\times G_2(\bR)$. It follows that any finite-index subgroup  $\Lambda_0 < \Lambda$  has property (T). In particular, a finite-index subgroup of $\Gamma$ cannot be isomorphic to a finite-index subgroup of $\Lambda$.

Nevertheless, we will now show that there exist a finite-index subgroup $\Gamma_0<\Gamma$ and a finite-index subgroup $\Lambda_0 < \Lambda$ that have isomorphic profinite completions. First note that by Corollary  \ref{cor:local isomorphism}, $G_1(\cO_v) \cong G_2(\cO_v)$ for any discrete valuation $v$, so there exists an isomorphism $$\Phi:\prod_v G_1(\cO_v) \ra \prod_v G_2(\cO_v)$$
where the product runs over all the discrete valuations of $k$.

By \cite[11.3]{Kneser79} the congruence kernel of $G_2$ is the trivial group and therefore $$\widehat \Lambda = \widehat{ G_2(\cO_k)}   \cong  G_2(\widehat \cO_k) = \prod_v G_2(\cO_v)$$
where the product runs over all discrete valuations of $k$ . For $G_1$, by \cite[11.5c]{Kneser79} the congruence kernel is of size 1 or 2. In any case, it is finite, so $\widehat \Gamma$ fits into the following short exact sequence $$1 \ra C \ra \widehat \Gamma \ra  \prod_v G_1( \cO_v)\ra 1$$ where $C$ is a finite group.
As $\widehat \Gamma$ is profinite, we can find a finite-index open subgroup that intersects $C$ trivially and therefore maps to $\prod_v G_1( \cO_v)$ injectively.  By Lemma \ref{lem:subgroups of profinite completion}, this subgroup  is necessarily of the form $\widehat{\Gamma_0}$, for a finite-index subgroup $\Gamma_0$ of $\Gamma$. 
We identify $\widehat{\Gamma_0}$ with its (faithful) image in $\prod_v G_1( \cO_v)$. By Lemma \ref{lem:subgroups of profinite completion}, there exists a finite-index subgroup $\Lambda_0$ of $\Lambda$ with $\widehat{\Lambda_0} \cong \Phi(\widehat{\Gamma_0})$, i.e., $\widehat{\Lambda_0}$ fits in the following commutative diagram:
$$\xymatrix{\widehat{\Gamma_0}\ar@{^{(}->}[rr]\ar[d]^{\Phi|_{\widehat{\Gamma_0}}}& &\widehat\Gamma\ar[d]^{\Phi}\\ \widehat{\Lambda_0}\ar@{^{(}->}[rr]& &\widehat\Lambda}$$

 Thus $\widehat{\Lambda_0}$ and $\widehat{\Gamma_0}$ are isomorphic. As explained above,  $\Lambda_0$ has property (T) and $\Gamma_0$ does not. This concludes the proof of Theorem \ref{thm:main theorem}.
\subsection{Remarks}\label{rank remark}
The above proof also show that the rank of the Lie group containing $\Gamma$ as a lattice is not a profinite property. Indeed, using Corollary \ref{cor:local isomorphism} and induction, one sees that for arbitrarily large fixed $n$, and every $0\leq k\leq \frac{n}{8}$ the groups $G_k\times G_k$ with $G_k:=\Spin(4k+1,n-4k)$ have irreducible lattice $\Gamma_k$ which share the same profinite completions, but are pairwise non-isomorphic, by Mostow's strong-rigidty (which may be applied to the images of $\Gamma_k$ in $\SO(4k+1,n-4k)\times \SO(4k+1,n-4k)$). The rank of $G_k\times G_k$ is $2(4k+1)$.


\bibliography{biblio}
\end{document}